\newtheorem{problem}{\sc Problem}
\newtheorem{lemma}{\sc Lemma}
\newtheorem{theorem}{\sc Theorem}
\newtheorem{ex}{\sc Example}
\newtheorem{rk}{\sc Remark}
\begin{document}

\title[New solutions to  Euler equations]
      {2d incompressible Euler equations: new explicit solutions}

\author[M. J. Mart\'{\i}n]{Mar\'{\i}a J. Mart\'{\i}n}
\address{Departamento de Matem\'aticas, Facultad de Ciencias (M\'odulo 17), Universidad Aut\'o\-no\-ma de Madrid. Campus de Cantoblanco, 28049, Madrid. Spain.}\email{mariaj.martin@uam.es}

\author[J. Tuomela]{Jukka Tuomela}
\address{Department of Physics and Mathematics, University of Eastern Finland. P.O. Box 111, FI-80101 Joensuu, Finland.} \email{jukka.tuomela@uef.fi}

\thanks{The first author is supported by UAM and EU funding through the InterTalentum Programme (COFUND 713366). She also thankfully acknowledges partial support from Spanish MINECO/FEDER research project MTM2015-65792-P}

\date{\today}

\subjclass [2010] {76B03, 35Q31, 13P10.}

\keywords{Explicit solutions, Euler equations, fluid mechanics}

\begin{abstract}

There are not too many known explicit solutions to the $2$-dimens\-ion\-al incompressible Euler equations in Lagrangian coordinates. Special mention must be made of  the well-known ones due Gerstner and Kirchhoff, which were already discovered in the  $19$th century.  These two classical solutions share a common characteristic, namely,  the dependence of the coordinates from the initial location is determined by a harmonic map, as recognized by Abrashkin and Yakubovich, who more recently -in the $1980$s- obtained new explicit solutions with a similar feature.
 
We present a more general method for constructing new explicit solutions in Lagrangian coordinates which contain as special cases all previously known ones. This new approach shows that in fact ``harmonic labelings'' are special cases of a much larger family. 

In the classical solutions, the matrix Lie groups were essential in describing the time evolution. We see that also the geodesics in these groups are important.

\end{abstract}

\maketitle

\section{Introduction}

There are two standard frameworks for analyzing the motion of an ideal homogeneous fluid. The Eulerian description observes at fixed locations the flow properties as the particles go by. The motion is then obtained by imposing the law of mass conservation 
\begin{equation}\label{eq-massconservation}
 \nabla \cdot u =0
\end{equation}
and the conservation of momentum law
\begin{equation}\label{eq-Euler}
 u_t+u\nabla u +\nabla_{\!x} p=0\,,
\end{equation}
where $u=u(t,x)$ is the velocity field in the time $t$ and space $x=(x_1, \ldots, x_n)$ variables, the scalar function $p$ represents the pressure and $\nabla_{\!x}$ denotes the gradient with respect to space variables.

An alternative representation of the flow is provided by the (material) Lagrangian coordinates, in which the observer follows the fluid by picking out a particular particle and keeping track of where it goes. There exists a precise Eulerian state corresponding to a Lagrangian state and vice versa (see \cite{C}). 
\par\smallskip
Let us briefly review  how the equations of the motion look like in Lagrangian coordinates. 
\par
Starting with a domain $\Omega_0\subset \mathbb{R}^n$ and using $x$ to denote the Eulerian (spatial) coordinates and $\beta=(\beta_1, \ldots, \beta_n)$ to denote the Lagrangian coordinates, we see that each coordinate $\beta\in\Omega_0$ identifies by means of the  map
\begin{equation}\label{eq-map}
\beta \mapsto x( t, \beta)=\Phi( t, \beta)=\Phi^t(\beta)
\end{equation}
the evolution in time of a specific particle. Here $\Phi^t$ is a diffeomorphism, $\Omega_t=\Phi^t(\Omega_0)$, and  $\Phi^0$ is the identity. Within these terms the Eulerian and Lagrangian descriptions of the flow are related by the equation
\begin{equation}\label{eq-eq}
\Phi'(t, \beta)=\frac{\partial}{\partial t} \Phi(t, \beta)=u(t,\Phi(t, \beta))\,.
\end{equation}
The reader may be advised of the fact that, unless otherwise explicitly stated, this is the notation we will use in this paper; namely, the derivative of any map $f$ with respect to $t$ will be denoted by $f'$.
\par
The incompressibility condition (\ref{eq-massconservation}) gives then the equation $\det(d\Phi^t)=1$ for all $t$. 
\par 
It is easy to check that differentiating (\ref{eq-eq}) with respect to $t$ and using (\ref{eq-Euler}) we have
\begin{equation}\label{eq-prelim}
 \Phi''(t, \Phi(t, \beta))=\left(u_t+u\nabla u\right)(t,\Phi(t,\beta))=-\nabla_{\!x} p(t, \Phi(t, \beta))\,.
\end{equation}

A direct application of the chain rule shows that the relation between the gradient  $\nabla_x p$ as above and $\nabla_{\!\beta} p$ (now with respect to the Lagrangian coordinates $\beta$) is given by $\nabla_{\!\beta}= (d\Phi^t)^T \nabla_{\!x}$. Therefore, we obtain that (\ref{eq-prelim}) is equivalent to
\begin{equation}\label{eq-prelim2}
       (d\Phi^t)^T \Phi''+\nabla_{\!\beta} p=0\,.
\end{equation}
\par
Finding explicit solutions in this form, when the initial mapping $\Phi^0$ satisfies $\det(d\Phi^0)=1$, is unnecessarily hard (\emph{cf.} Section~\ref{sec-non-existence} below). However, it is possible to introduce a further modification as follows. Let $D\subset \mathbb{R}^n$ be a domain and assume that there is a diffeomorphism $\hat\varphi\,:\,D\to \Omega_0$. We call $D$ the labelling domain or parameter domain, and denote the  labels, \emph{i.e.} the coordinates of $D$, by  $\alpha=(\alpha_1, \ldots, \alpha_n)$.  Within these terms, another application of the chain rule allows us to re-write  \eqref{eq-prelim2} in terms of these new coordinates as
\[
(d\varphi^t)^T\varphi''+\nabla_{\!\alpha} p=0\,.
\]
In order not to burden the notation, let us agree with the convention that, otherwise specified, we use $\nabla$ to denote $\nabla_\alpha$ and $(d\varphi^t)^T=(d\varphi)^T$. We can then look for the solutions to the following problem.

\begin{problem}\label{problem}
Find diffeomorphisms $\varphi=\varphi(t,\alpha)=\varphi^t(\alpha)$ and a function $p=p(t,\alpha)$ with $\alpha \in D$ and $t\geq 0$ such that
\begin{equation}
       (d\varphi)^T\varphi''+\nabla p=0
\label{yht}
\end{equation}
and with $\det(d\varphi^t)=\det(d\varphi^0)\ne 0$ for all such $t$.
\end{problem}
\par\smallskip

If such a $\varphi$ can be found then we can define $\Phi^t=\varphi^t\circ \hat\varphi^{-1}$  which then gives the Lagrangian description of the fluid properly speaking. 
\par\smallskip

\begin{rk}
Note that there is no need to suppose that the domain is simply connected. This is because the existence of the pressure is a consequence of the fact that in Eulerian coordinates the velocity field is divergence free. In fact, one of the standard problems in the applications of fluid mechanics is the two dimensional flow of the air around the airplane wing and in this case the domain is not simply connected. 
\end{rk}

Though the non-linearity of the problem makes it rather difficult to construct interesting explicit solutions in either Eulerian coordinates or Lagrangian coordinates, it seems that the most complete description of a flow is attained within the Lagrangian framework. Some of the celebrated explicit solutions to the two-dimensional incompressible Euler equations in Lagrangian variables are in fact quite old: Gerstner's flow \cite{G},  found in 1809 (rediscovered in 1863 by Rankine \cite{R});  Kirchhoff's elliptical vortex \cite{K}, found in 1876. More recently Abrashkin and Yakubovich \cite{AY} found some new solutions in 1984. Somewhat surprisingly, apparently these examples (and small variations of them) were all known explicit solutions up to short time ago. 
\par

In the construction of all these classical flows, harmonic maps are essential, as all of them present a labelling by harmonic functions. A. Aleman and A.~Constantin \cite{AC} proposed a complex analysis approach aimed at classifying all such flows. With the aim of complementing the work in \cite{AC}, a new different approach, based on ideas from the theory of harmonic mappings, was used in \cite{CM}, where the authors explicitly provide all solutions, with the specified structural property, to the incompressible 2-dimensional Euler equations (in Lagrangian variables). 
\par
It is not difficult to check that the map (\ref{eq-map}) corresponding to the classical solutions due to Gerstner and Kirchhoff (and those ones obtained by Abrashkin and Yakubovich in \cite{AY}) as well as those in \cite{AC} and \cite{CM} are as follows. Up to an additive constant, either
\begin{equation}\label{eq-structure-affine}
\varphi(t, \alpha)=\varphi^t(\alpha)=A(t)v(\alpha)\,, 
\end{equation}
where $A$ belongs to the special linear group $\mathbb{SL}(2)$ of $2\times 2$ matrices with determinant $1$ and $v$ is a vector field whose coordinates are harmonic functions and such that $\det(dv)\neq 0$, or
\begin{equation}\label{eq-structure-affine-combination}
\varphi(t, \alpha)=\varphi^t(\alpha)=M_1(t)v(\alpha)+M_2(t)w(\alpha)\,,
\end{equation}
where $M_1$, $M_2$ belong to the group $\mathbb{O}(2)$ of $2\times 2$ orthogonal matrices and $v$ and $w$ are, again, vector fields whose coordinates are harmonic functions and satisfy $\det(dv)\ne 0$ and $\det(dw)\ne 0$. 
\par\smallskip
In this article, instead of considering those solutions to the $2$-dimensional incompressible Euler equations for which the map (\ref{eq-map})  is harmonic for all times $t$, we will focus on analyzing those solutions for which the labelling map $\varphi$ takes one of the forms described by (\ref{eq-structure-affine}) or (\ref{eq-structure-affine-combination}) without any assumption on harmonicity of the vector fields involved in this description. The methods used show that perhaps curiously the harmonicity of the maps is in fact not essential (\emph{cf.} Sections~\ref{sec-sol1} and \ref{sec-sol2} below). Harmonic maps simply provide one family of solutions to a certain PDE system which has plenty of other solutions as well. This PDE system allows us to  construct new families of solutions. 

 \section{Preliminaries and notation}
Unfortunately, for questions of space, it is not possible to include all the details related to the theory involved in the approach developed in this paper to obtain new explicit solutions to the $2$-dimensional incompressible Euler equations, which is the main goal in this article. Nevertheless, with the hope to make this paper self-contained, we now review the main tools, concepts, and results used in our standpoint. We also include the main references to help the reader to figure out the key points in the proofs of our results, though we should point out that it is possible to check directly  that the functions $\varphi$ obtained in our main Theorems~\ref{thm-pde} and \ref{thm-pde2} -or in Theorems~\ref{thm-2d} and \ref{thm-2d-2}- satisfy the requirements stated in Problem~\ref{problem}, thus they provide new explicit solutions to the problem considered.
\subsection{Geometry}
We will need some elementary notions related to Riemannian geometry; one standard reference is \cite{petersen}. 
\par
Recall that given a Riemannian manifold $M$, the curve $a\,:\,\mathbb{R}\to M$ is a \emph{geodesic} if it satisfies the differential equation
\begin{equation}\label{eq-Christoffel}
        (a'')^k+\Gamma_{ij}^k  (a')^i (a')^j=0\,.
\end{equation}
Here we are using the \emph{Einstein summation convention}: in those cases when in a single term an index appears twice (once up and once down) and is not otherwise defined, it implies summation of that term over all the values of the index.  The symbols $\Gamma_{ij}^k$ are the Christoffel symbols of second kind and, as above, $a'$ represents the derivative of the curve $a$ with respect to the parameter (the time $t$, say) it depends on. 
\par
In those cases when $M$ is an $n-1$ dimensional submanifold of $\mathbb{R}^n$ with the induced Riemannian metric, it is sometimes convenient to express the geodesic equations in ambient coordinates. More concretely, let $b$ be any non-zero normal vector field of $M$. Then $a\,:\,\mathbb{R}\to \mathbb{R}^n$ is a geodesic if there is some function $\lambda\,:\,M\to\mathbb{R} $ such that
\[
  \begin{cases}
     a''+\lambda b=0\,,\\
     \langle  a',b\rangle=0\,,\\
     a(t)\in M\,.
     \end{cases}
\]

\subsection{Algebra}  In this section, we refer the reader to  \cite{colios} for more information about questions related to computational commutative algebra. Though  the theory about the relationship between varieties and ideals is much more developed in the complex case than in the real case, in our context only real varieties are of interest. For general information about real algebraic geometry, see \cite{bocoro}. 
\par\smallskip
Let us consider polynomials of variables $x_1,\dots,x_n$ with coefficients in the field $\mathbb{K}$ and let us denote the ring of all such polynomials by $\mathbb{A}=\mathbb{K}[x_1,\dots,x_n]$. The given polynomials $f_{1},\ldots ,f_{k}\in \mathbb{A}$ generate an \emph{ideal}

\begin{equation*}
I =\langle f_{1},\ldots ,f_{k}\rangle =\{f\in \mathbb{A}
\colon f=h_{1}f_{1}+\ldots +h_{k}f_{k},~\text{where}~h_{i}\in \mathbb{A}\}.
\end{equation*}
We say that the polynomials $f_{i}$ are \emph{generators} of the ideal $I$ and as a set they are the \emph{basis} of $I$.  Notice that we do not assume that the polynomials $\{ f_{1},\dots,f_k\}$ are  independent in any way so that if $f_{k+1}\in I $, then $\{ f_{1},\dots, f_{k}, f_{k+1}\}$ is also a basis of $I $. 
\par\smallskip
\par
Not all the bases of an ideal are equally good; the good ones are known as \emph{Gr\"obner bases}. Gr\"obner bases depend on the monomial order, but once we have chosen a particular order the Gr\"obner basis is essentially unique and one can actually compute it with the  \emph{Buchberger algorithm}.   It may be convenient as well to stress that in actual computations below (related to Gr\"obner bases), we use {\sc Singular}~\cite{singular}. 

Let $I \subset\mathbb{A}$ and let $G=\{g_1,\dots,g_\ell\}$ be a Gr\"obner basis of $I$. Then given any $f\in\mathbb{A}$ we can compute the representation $f=\sum_{j=1}^\ell a_jg_j+R$ such that  $R$ (the remainder) is unique. This remainder is also known as the \emph{normal form} of $f$ with respect to $I$ and in this case we can write $R=\mathsf{NF}(f,I)$ or  $R=\mathsf{NF}(f,G)$. The normal form can also be interpreted as an element of the \emph{residue class ring} $\mathbb{A}/I$. 
\par
The \emph{radical} of $I $ is
\begin{equation*}
\sqrt{I }=\{f\in \mathbb{A}\colon f^{m}\in I\ {\rm for\ some\ integer\ } m\geq 1\}.
\end{equation*}
An ideal $I $ is a \emph{radical ideal} if $I =\sqrt{I }$.  The \emph{real radical}  $\sqrt[\mathbb{R}]{I }$ is the set of polynomials $f\in \mathbb{A}$ which satisfy the condition that 

\[
f^{2r}+\sum_{i=1}^m g_i^2\in I 
\]
for some positive integers $r$ and $m$ and some functions $g_i,\ i=1, \ldots, m$, in $\mathbb{A}$.
An ideal is a \emph{real ideal} if $\sqrt[\mathbb{R}]{I }=I $. In particular, if $I $ is a real ideal and 
\[
f_1^2+\dots+f_m^2\in I\,,
\]
then $f_j\in I $ for all $j$. 

An ideal $I $ is \emph{prime} if the condition that  $fg\in I$ implies that, necessarily, either $f\in I$ or $g\in I$. Every prime ideal is a radical ideal.
\par\smallskip
Let $I \subset\mathbb{A}$. Then $k$-\emph{th elimination ideal} of $I$ equals
\begin{equation*}
I _k=I \cap \mathbb{K}[x_{k+1},\dots,x_n].
\end{equation*}

Geometrically, the concept of $k$-th elimination ideal is related to the \emph{projections} $\pi_k\,: \mathbb{L}^n\to \mathbb{L}^{n-k}$ defined by
\[
\pi_k\left((x_1,\dots,x_n)\right) =(x_{k+1},\dots,x_n).
\]
A Gr\"obner basis of an elimination ideal can also be computed using a suitable product order in the Buchberger algorithm.
\par
To each ideal $I $ we can associate the corresponding variety $\mathsf{V}(I)$. There are various ways to define the associated variety depending on the desired level of abstraction. For us, the following procedure is the most convenient.
\par
Let $\mathbb{L}$ be some extension field of $\mathbb{K}$ (typically, in applications, one has $\mathbb{K}=\mathbb{Q}$ and $\mathbb{L}=\mathbb{R}$). Then we set 
\[
\mathsf{V}(I )=\big\{a\in \mathbb{L}^{n}\colon f(a)=0\ {\rm for\ all\ }
~f\in I \big\}\subset \mathbb{L}^{n}.
\]
Note that $\mathsf{V}(I )=\mathsf{V}(\sqrt{I })$.
\par
The connection between all these notions is that if $\mathbb{L}$ is algebraically closed then 
\begin{equation*}
\overline{\pi_k\big(\mathsf{V}(I )\big)}=\mathsf{V}(I _k)\,,
\label{projektio}
\end{equation*}
where the overline denotes the \emph{Zariski closure}.
\par\medskip

\subsection{PDEs}
Let  $\nu$ be a multi-index and, as usual, let  $|\nu|=\nu_1+\dots+\nu_n$. It is well-known that any linear PDE can be written as
\[
        Lu=\sum_{|\nu|\le q}c_\nu\partial^\nu u=f\,,
\]
where $c_\nu$ are some known matrices, not necessarily square. 
\par
The \emph{principal symbol} $\sigma L$ of the operator $L$ is defined by
\begin{equation*}\label{eq-elliptic}
     \sigma L=\sum_{|\nu|= q}c_\nu\xi^\nu\,,\quad \xi\in\mathbb{C}^n\,,
\end{equation*}
and we say that $L$ is \emph{elliptic} if $\sigma L$ is injective for all $\xi\in\mathbb{R}^n$, $\xi\neq 0$.
\par\smallskip
In the context of formal theory of PDEs \cite{werner} it is convenient to use the so-called \emph{jet notation for derivatives}. More concretely, given a vector field $u=(u^1,\dots,u^n)$, we use 
\[
    u_\nu^j=\partial^\nu u^j=
    \frac{\partial^{|\nu|}u^j}{\partial x_1^{\nu_1}\cdots\partial x_n^{\nu_n}}
\]
to denote the derivatives of the components of $u$. Also, we denote by $u_{|\nu|}$ the vector which contains all derivatives of $u$ of order $|\nu|$.

One final remark must be mentioned at this point. There are two natural conventions regarding the sign in the Cauchy-Riemann equations. Namely, one which is used in complex analysis and the other which comes from the de Rham complex. In the present context, it is more natural to use the one provided from the latter theory which allows us to use the same definition in any dimension. Hence, we say that  a map or vector field $u$ is a \emph{Cauchy-Riemann map} (or just CR map) if $du$ is symmetric and $\mathsf{tr}(du)=\mathsf{div}(u)=0$.

\section{Area-preserving and harmonic solutions}\label{sec-non-existence}

The aim of this section is to show that under the assumption that the maps considered in Problem~\ref{problem} are area-preserving (so that $\det(d\varphi^t)=1$ for all $t\geq 0$) and harmonic in the domain $D$  is in a sense too restrictive, as Theorem~\ref{thm-non-existence} below shows. We have not been able to find an explicit reference for this result, which we  include for the sake of completeness. In our proof, we use the formal theory of PDEs. A comprehensive outline of the relevant concepts can be found in the monographs  \cite{pommaret} and \cite{werner}.  We refer the reader to these books for the details.
\par\smallskip
Let $\mathcal{E}=\mathbb{R}^n\times \mathbb{R}^n$ be a bundle with coordinates $(x,y)$ and let the projection $\pi\,:\,\mathcal{E}\to \mathbb{R}^n$ be defined by $\pi(x,y)=x$. The corresponding \emph{jet bundle of order} $q$ is denoted by $J_q(\mathcal{E})$.  
\par
Let a system of PDEs of order $q$ be given by a map $F\,:\,J_q(\mathcal{E})\to \mathbb{R}^k$. To this map we can associate a set $\mathcal{R}_q=F^{-1}(0)\subset J_q(\mathcal{E})$. Two fundamental operations come up naturally: projection and prolongation.

To \emph{prolong} a given system of PDEs, one just differentiates all the equations. More concretely, if we have system $\mathcal{R}_q\subset J_q(\mathcal{E})$ given by some map $F$, then its first prolongation is $\mathcal{R}_{q+1}=\rho_1(\mathcal{R}_q)\subset J_{q+1}(\mathcal{E})$, given by
\[
 \mathcal{R}_{q+1}=\rho_1(\mathcal{R}_q)\quad\equiv\quad
 \begin{cases}
    F=0\,,\\
    \frac{\partial F}{\partial x_j}=0, & \ 1\le j\le n\,.
    \end{cases}
\]

As an explicit example that shows how this procedure works, let us consider the \emph{Killing equations} in $\mathbb{R}^n$ (recall that Killing vector fields are infinitesimal generators of the volume preserving maps). These equations are given by the system
\[
 \mathcal{R}_1\quad\equiv\quad  \left\{\partial_j u^i+\partial_i u^j=0\,,\right.\quad 1\leq i, j\leq n\,.
 \]

In order to get the prolongation $\mathcal{R}_2$, we differentiate all equations in $\mathcal{R}_1$ to obtain 
\[
\partial^2_{jk}u^i+\partial^2_{ik}u^j=0\,,\quad  1\leq i, j, k\leq n\,.
\]
Now, it is obvious that we can interchange the values of $i$, $j$, and $k$ to get

\[
  \begin{cases}
   \partial^2_{jk}u^i+\partial^2_{ik}u^j=0\,,\\
    \partial^2_{kj}u^i+\partial^2_{ij}u^k=0\,,\\
      \partial^2_{ji}u^k+\partial^2_{ki}u^j=0\,,
       \end{cases}
\]
which gives (summing up the first two equations and using the third one) $2\partial^2_{jk}u^i+\partial^2_{ik}u^j+\partial^2_{ij}u^k=0$. That is, $\partial^2_{jk}u^i=0$ for all $1\leq i, j, k\leq n$.
Hence,
\[
    \mathcal{R}_2\quad\equiv\quad  \begin{cases}
     \partial_j u^i+\partial_i u^j=0\,,\\
     \partial^2_{jk}u^i=0\,,
     \end{cases}
\]
and therefore, we obtain that the solutions are $u(x)=Ax+b$, where $A$ is a skew symmetric matrix; the dimension of the solution space is $n(n+1)/2$.
\par
\begin{rk}
The Killing equations are of \emph{finite type}: after prolonging ``far enough''  (just once in this case) it is possible to express all partial derivatives of certain order in terms of lower order derivatives and the formal solution space is finite dimensional, so that there is only a finite number of degrees of freedom in the general solution.
\end{rk}
\par\smallskip
Usually, in addition to prolongation, one needs to ``project''. The \emph{projections} $\pi^{q+r}_q\,:\,J_{q+r}(\mathcal{E})\to J_q(\mathcal{E})$ 
are simply the maps which forget the highest derivatives or jet variables.   When this map is restricted to the differential equation, we obtain the map $\pi_q^{q+r}\,:\,\mathcal{R}_{q+r}\rightarrow \mathcal{R}_q$. The image of this map is denoted by $\mathcal{R}_q^{(r)}$. Note that we always have $\mathcal{R}_q^{(r)}\subset \mathcal{R}_q$. The fact that the inclusion is strict means that by differentiating and eliminating, we have found \emph{integrability conditions}; that is, equations of order $q$ which are algebraically independent of the original equations and which are also satisfied by the solutions of the system. 
\par
The goal of this type of analysis is thus to find all integrability conditions. The theory behind this intuitive idea is too involved to be developed in this paper; as mentioned above, the details can be found in \cite{pommaret} and \cite{werner}. For the result given below the general theory is not needed. Note that when one finds integrability conditions, the formal solution space remains the same. 
\par
Now we state the main result in this section.
\par\smallskip
\begin{theorem}\label{thm-non-existence}
Let $y=(y^1, y^2)\,:\,\mathbb{R}^2\to \mathbb{R}^2$ be a smooth area-preserving map. Suppose that  $y$ is harmonic, that is, $ \Delta y^1=\Delta y^2=0$. Then $y$ is an affine map.
\end{theorem}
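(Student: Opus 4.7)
The natural framework is one-variable complex analysis. Identifying $\mathbb{R}^2$ with $\mathbb{C}$ via $z = x_1 + ix_2$, set $f = y^1 + iy^2$. Harmonicity of both components is equivalent to $f_{z\bar z} = 0$, so by the Poincar\'e lemma there exist an entire function $g$ and an antiholomorphic function $h$ with $f(z) = g(z) + h(\bar z)$. A direct computation shows $\det(dy) = |f_z|^2 - |f_{\bar z}|^2$, so the area-preserving hypothesis rewrites as
\[
|G(z)|^2 - |H(z)|^2 = 1, \qquad G(z) := g'(z), \qquad H(z) := \overline{h'(\bar z)},
\]
where $G$ and $H$ are now both entire functions on $\mathbb{C}$.

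The plan is to show that $G$ and $H$ must both be constants; once this is done, $f_z$ and $f_{\bar z}$ are constants and $f$ is manifestly affine. I would apply $\partial_z\partial_{\bar z}$ (one quarter of the Laplacian) to the identity above. Using the standard identity $\Delta|\phi|^2 = 4|\phi'|^2$ for entire $\phi$, this gives $|G'(z)| = |H'(z)|$ pointwise. If $H'\not\equiv 0$, the meromorphic quotient $G'/H'$ has modulus $1$ wherever defined, which rules out poles; so $G'/H'$ is entire and bounded, hence equals a unimodular constant $\lambda$ by Liouville's theorem. Integrating, $G = \lambda H + c$ for some constant $c \in \mathbb{C}$.

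Substituting this back into $|G|^2 - |H|^2 = 1$ and using $|\lambda| = 1$ gives
\[
2\,\mathrm{Re}\bigl(\lambda\bar c\, H(z)\bigr) + |c|^2 = 1 \qquad \text{for every } z \in \mathbb{C}.
\]
The case $c = 0$ is immediately inconsistent. For $c \neq 0$, the real part of the entire function $\lambda\bar c H$ is constant, so $\lambda\bar c H$, hence $H$ itself, is constant; this contradicts $H'\not\equiv 0$. Therefore $H$ must already be constant, and then $|G|^2 = 1 + |H|^2$ is constant too, so the entire function $G$ has constant modulus and is itself constant. Consequently $f(z) = Az + B\bar z + C$ for constants $A, B, C \in \mathbb{C}$, which in the real $(x_1,x_2)$ coordinates is precisely an affine map.

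The only delicate point I anticipate is the step promoting the pointwise equality $|G'| \equiv |H'|$ to the algebraic identity $G' = \lambda H'$; it rests on a clean meromorphic/removable-singularity argument at the common zeros of $G'$ and $H'$. Everything else is essentially bookkeeping once the constant-modulus quotient has been isolated.
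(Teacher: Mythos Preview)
Your proof is correct and takes a genuinely different route from the paper. The paper argues via the formal theory of PDEs: it encodes the area-preserving and harmonicity constraints as a polynomial system $\mathcal{R}_2$ in the second-order jet variables, prolongs once to $\mathcal{R}_3$, computes the elimination ideal back in the second-order jets using a Gr\"obner basis, and finds that sums of squares such as $(y^1_{11})^2+(y^1_{20})^2$ lie in the ideal, forcing all second derivatives to vanish over $\mathbb{R}$. Your argument instead exploits the complex structure: the harmonic decomposition $f=g(z)+h(\bar z)$ turns the Jacobian constraint into $|G|^2-|H|^2=1$ for two entire functions, after which Laplacian-of-modulus-squared and Liouville do the work. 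Your approach is shorter, self-contained, and requires no computer algebra; the paper's approach, while heavier, is dimension-agnostic in spirit and showcases the prolongation/elimination machinery that is used elsewhere in the article, which is presumably why the authors chose it. The one place your write-up could be tightened is the sentence about $G'/H'$: you should state explicitly that $|G'|=|H'|$ forces the zero orders of $G'$ and $H'$ to coincide at every common zero, so the quotient extends to an entire function of constant modulus~$1$; you allude to this but it deserves one clean line.
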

\begin{proof}
Let us define
\[
  f_1=   \det(dy)-1=y^1_{10}y^2_{01}-y^1_{01}y^2_{10}-1\,,
 \] 
where, as usual, the first sub-index denotes the number of derivatives with respect to the first variable and the second sub-index denotes the number of derivatives with respect to the second variable.
\par
The requirement that  $y$ is area preserving ($f_1=0$) and harmonic produces the initial system
\begin{equation}\label{eq-system}
\mathcal{R}_2\quad\equiv\quad    \begin{cases}
   f_1=y^1_{10}y^2_{01}-y^1_{01}y^2_{10}-1=0\,,\\
   f_2=y^1_{20}y^2_{01}+y^1_{10}y^2_{11}-y^1_{11}y^2_{10}-y^1_{01}y^2_{20}=0\,,\\
   f_3=y^1_{11}y^2_{01}+y^1_{10}y^2_{02}-y^1_{02}y^2_{10}-y^1_{01}y^2_{11}=0\,,\\
   f_4=y^1_{20}+ y^1_{02}=0\,,\\
   f_5=y^2_{20}+ y^2_{02}=0\,,
   \end{cases}
\end{equation}
where $f_2$ and $f_3$ are obtained by prolonging $f_1$. Hence we can write $\mathcal{R}_2=f^{-1}(0)\subset J_2(\mathcal{E})$,  where $f=(f_1,f_2,f_3,f_4,f_5)$. 
\par
In order to prove the result, we need to check that all solutions of the system  (\ref{eq-system}) are, in fact, affine.
Notice that the equations in \eqref{eq-system} are polynomials in jet variables so that we can write $\mathcal{R}_2\simeq \ \mathbb{R}^2\times \mathbb{R}^2\times V_2$, where  $V_2=\mathsf{V}(I_2)\subset\mathbb{R}^{10}$. Here, $I_2=\langle f_1,f_2,f_3,f_4,f_5\rangle\subset \mathbb{Q}[y_1,y_2]$, where, as explained in the previous section, we use $y_{|\nu|}$ to denote the vector which contains all derivatives of $y$ of order $|\nu|$.
 
\par
The prolongation can then be written as
\begin{align*}
 \mathcal{R}_3\quad\equiv\quad  
 \begin{cases}
 \mathcal{R}_2\,,\\
  \partial^\nu f_1=0\,,&\ |\nu|=2\,,\\
    \partial^\nu f_4=0\,,&\ |\nu|=1\,,\\
     \partial^\nu f_5=0\,,&\ |\nu|=1\,.
   \end{cases}
\end{align*}

There are thus 7 equations  ($f_i=0$, $i=6, 7, \ldots, 12$) of order 3. The relevant ideal can be written as
\[
   I_3=I_2+\langle f_6,\dots,f_{12}\rangle\subset \mathbb{Q}[y_1,y_2,y_3]\,.
\]
To construct the projection we must then compute the $8$th elimination ideal $I_{3,8}$ of $I_3$ which is
\begin{equation*}
       I_{3,8}=I_3\cap \mathbb{Q}[y_1,y_2]\,.
\end{equation*}
Now choosing an appropriate product order, we obtain the Gr\"obner basis 
\[
\{f_1,\dots,f_5, g_1,g_2,g_3,g_4\}
\]
 of $I_{3,8}$, where

\begin{equation*}
\begin{array}{ccccc}
    g_1= y^1_{11}y^2_{20}-y^2_{11}y^1_{20},  &   &  &  &   g_2=  (y^1_{11})^2+(y^1_{20})^2,\\     
    g_3= y^2_{11}y^1_{11}+y^2_{20}y^1_{20}, &  & {\rm and} &  & g_4=   (y^2_{11})^2+(y^2_{20})^2\,.
   \end{array}
\end{equation*}
However, since we are only interested in real varieties, we get (from the conditions $g_2=g_4=0$) that, necessarily,
\[
  y^1_{11}=y^1_{20}=y^2_{20}=y^2_{11}=0\,.
\]
Hence all second derivatives are zero and we have
\[
\mathcal{R}_2^{(1)}\quad\equiv\quad  
  \begin{cases}
    y^1_{10}y^2_{01}-y^1_{01}y^2_{10}-1=0\,,\\
   y^k_\nu=0\,,&\ |\nu|=2\,, k=1,2\,.
  \end{cases}
\]
These conditions imply that $y=y(x)=Ax+b$  for some constant matrix $A$ with determinant equal to $1$ and some constant vector $b$. 
\end{proof}

\begin{rk}
Note that in the proof of the previous theorem, we have essentially constructed the real radical of the elimination ideal $I_{3,8}$ which coincides with
\[
   \sqrt[\mathbb{R}]{I_{3,8}}=\langle y^1_{10}y^2_{01}-y^1_{01}y^2_{10}-1\rangle+
   \langle y^k_\nu, |\nu|=2\,, k=1,2\rangle\,.
\]
In general, computing the real radical is actually very difficult, and searching  for good algorithms is an active research topic \cite{sdp-sos}. This shows that constructing the projection is  sometimes quite tricky.
\end{rk}
\par
\begin{rk}
The system  $\mathcal{R}_2$, defined by (\ref{eq-system}), is of \emph{finite type}, like the system of Killing equations. Here in $\mathcal{R}_2^{(1)}$ all second order derivatives are given, and indeed there are only 5 arbitrary constants in the general solution. This number comes directly from our computations if we consider $\sqrt[\mathbb{R}]{I_{3,8} }$ as an ideal in the ring $\mathbb{Q}[y,y_1,y_2]$ since in this case 
\[
  \mathsf{dim}\big(\mathsf{V}(\sqrt[\mathbb{R}]{I_{3,8} })\big)=5\,.
\]
The dimension of the variety can be computed once the Gr\"obner basis is available.

Note that the dimension of the variety $\mathsf{V}(I_{3,8})$ considered in the proof of Theorem~\ref{thm-non-existence} as a complex variety is higher than its dimension  as a real variety. More concretely,
\[
  \mathsf{dim}\big(\mathsf{V} ( I_{3,8})\big)=6>5=
   \mathsf{dim}\big(\mathsf{V}(\sqrt[\mathbb{R}]{I_{3,8} })\big)\,.
\]
\end{rk}

\section{New solutions to the $2$D incompressible Euler equations, I}\label{sec-sol1}
As mentioned in the introduction, in this paper instead of considering those solutions to the $2$-dimensional incompressible Euler equations for which the map (\ref{eq-map})  is harmonic for all times $t$, we will focus on analyzing those solutions for which the labelling map $\varphi$ takes one of the forms described by (\ref{eq-structure-affine}) or (\ref{eq-structure-affine-combination}) without any assumption on harmonicity of the vector fields involved in this description. 
\par
We now consider the cases when $\varphi$ is given by (\ref{eq-structure-affine}).  Our first result is the following lemma, which works in any dimension $n$.

\begin{lemma}\label{lem-symmetric} Let $A\in\mathbb{SL}(n)$. Then the map of the form (\ref{eq-structure-affine})  provides a solution to the incompressible Euler equations if $A^TA''$ is symmetric and $v$ is any diffeomorphism. In this case the pressure is given by 
\[
  p=-\tfrac{1}{2}\,\langle v,A^TA''v\rangle\,.
\]
\end{lemma}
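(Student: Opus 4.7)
The plan is to verify directly that the ansatz $\varphi(t,\alpha)=A(t)v(\alpha)$ together with the proposed pressure satisfies both conditions of Problem~\ref{problem}, using only the chain rule and the symmetry hypothesis on $A^T A''$.

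First, I would dispose of the incompressibility constraint. Since $A(t)$ is independent of $\alpha$, we have $d\varphi^t = A(t)\,dv$, hence
\[
\det(d\varphi^t) = \det(A(t))\det(dv) = \det(dv),
\]
using $A(t)\in\mathbb{SL}(n)$. Because $v$ is a diffeomorphism this is a nonzero constant in $t$, so $\det(d\varphi^t)=\det(d\varphi^0)\neq 0$ holds automatically.

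Next, I would expand the left-hand side of \eqref{yht}. Differentiating twice in $t$ gives $\varphi''=A''(t)v(\alpha)$, so
\[
(d\varphi)^T\varphi'' = (dv)^T A^T A''\,v.
\]
Writing $B(t)=A(t)^T A''(t)$, producing the pressure reduces to exhibiting a scalar $p$ with $\nabla_\alpha p=-(dv)^T B\,v$. For the candidate $p(t,\alpha)=-\tfrac{1}{2}\langle v(\alpha),B(t)v(\alpha)\rangle$, the chain rule yields
\[
\nabla_\alpha p = -\tfrac{1}{2}(dv)^T(B+B^T)v.
\]
Invoking the hypothesis that $B=A^T A''$ is symmetric collapses this to $-(dv)^T B\,v$, which matches $-(d\varphi)^T\varphi''$ exactly, and \eqref{yht} is verified.

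The main (and really only) obstacle is conceptual rather than computational: one should see that the symmetry assumption on $A^T A''$ is precisely the integrability condition that allows a pressure to be extracted from $(d\varphi)^T\varphi''$. Indeed, pulling back a quadratic form $\tfrac12 v^T B v$ by $v$ produces a gradient of the form $(dv)^T\tfrac12(B+B^T)v$, so only the symmetric part of $B$ contributes; without $B=B^T$ the quantity $(dv)^T Bv$ would not arise as a gradient of any such quadratic, and no pressure of this simple form could exist. This is exactly why the symmetry condition appears in the hypothesis.
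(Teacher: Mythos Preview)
Your proof is correct and follows exactly the approach the paper has in mind; the paper's own proof simply asserts that $\det(d\varphi^t)=\det(d\varphi^0)\neq 0$ is clear and that \eqref{yht} follows from a straightforward calculation, which is precisely what you have written out in detail.
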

\begin{proof}
It is clear that $\det(d\varphi^t)=\det(d\varphi^0)\neq 0$. A straightforward calculation shows that (\ref{yht}) holds. 
\end{proof}
\par
\begin{rk}
Notice that the assumption on the symmetry of  $A^TA''$ in the previous lemma is rather a mild condition since the dimension of $\mathbb{SL}(n)$ is $n^2-1$ and the condition on the symmetry $(A^TA'')^T=A^TA''$ gives $\tfrac{1}{2}\,n(n-1)$ differential equations of second order. So when $n=2$ we expect that there are 2 arbitrary functions in the solution and this is precisely what happens, as will become apparent below.
\end{rk}

\par
It may seem strange that there is no condition on the vector field $v$ in the previous lemma, except that it is a diffeomorphism. Let us outline the reasons that justify this observation.  
\par\smallskip
Recall that the labelling coordinates are completely arbitrary, they do not have any physical significance. Let now $\varphi$ have the form  (\ref{eq-structure-affine}). Then, by writing
\[
\varphi(t,\alpha)=\left(A(t)A^{-1}(0)\right)\,\left(A(0)v(\alpha)\right)=\widetilde{A}(t) V(\alpha)\,,
\]
if needed, we can assume without loss of generality that $A(0)=I$ and $v$ is a diffeomorphism from the labelling domain $D$ to the Lagrangian domain $\Omega_0$. 
\par
We then have from (\ref{eq-map}) that we can write
\begin{equation}\label{eq-Eulerian}
x=\Phi^t(\beta)=\varphi^t\circ v^{-1}=A(t)\beta\,,
\end{equation}
 so that the Eulerian coordinates depend linearly on the Lagrangian coordinates, and since the map $v$ does not appear anymore in the physical description of the flow when considering the Eulerian coordinates, it makes sense that there is no need to have conditions on it. 

Let us also see how the motion looks like in Eulerian coordinates in this case. Using (\ref{eq-Eulerian}), it is obvious that 
\[
   A'(t)\beta=x'(t)=u(t, x(t))=u(t, A(t)\beta)\,,
 \] 
so that $u(t, x)=A' A^{-1}x$. That is, the velocity vector field is a linear vector field. 
\begin{rk}
Since we are assuming that $A\in\mathbb{SL}(n)$, the matrix that determines the velocity vector field, $A'A^{-1}$, has the property $\mathsf{trace}(A'A^{-1})=0$. Hence, $A'A^{-1}$ is an element of the Lie algebra $\mathfrak{sl}(n)$ corresponding to the Lie group $\mathbb{SL}(n)$. If we take a smaller group and require that $A\in\mathbb{SO}(n)$ then $A'A^{-1}=A'A^T$ is skew symmetric, \emph{i.e.} an element of $ \mathfrak{so}(n)$. Moreover, now we easily obtain 
\[
   u_t+u\nabla u=A''A^{-1}x=-\nabla_x p\,.
\]
\par
Note also  that  $A^TA''$ is symmetric implies that $A''A^{-1}$ is symmetric so that the pressure is given by
\[
p=-\tfrac{1}{2}\,\langle \beta,A^TA''\beta\rangle=-\tfrac{1}{2}\,\langle x,A''A^{-1}x\rangle\ .
\]
\end{rk}

\par\smallskip
Now, let us focus on the $2$-dimensional case. In order to present the results, it is convenient to stress that any matrix $A\in\mathbb{SL}(2)$ can be parametrized using the function $\psi:\mathbb{R}^3\to \mathbb{SL}(2)$ defined by
\begin{equation}\label{sl2-param}
\psi(s,\mu,\theta)=\cosh(s) R_1+\sinh(s) R_2\,,
\end{equation}
where 
\[
 R_1=\begin{pmatrix}
        \cos(\mu)& -\sin(\mu)\\
          \sin(\mu)& \cos(\mu)
          \end{pmatrix}\quad{\rm and }\quad 
          R_2=\begin{pmatrix}
        \cos(\theta)& \sin(\theta)\\
          \sin(\theta)& -\cos(\theta)
          \end{pmatrix}\,.
\]
\par
Let now $A=A(t)$, where $t$ belongs to some interval $I\subset\mathbb{R}$, be a curve in $\mathbb{SL}(2)$. Then there are scalar functions $s, \mu,$ and $\theta$ on $I$ such that 
\[
A(t)=\cosh(s(t)) \begin{pmatrix}
        \cos(\mu(t))& -\sin(\mu(t))\\
          \sin(\mu(t))& \cos(\mu(t))
          \end{pmatrix}+\sinh(s(t)) \begin{pmatrix}
        \cos(\theta(t))& \sin(\theta(t))\\
          \sin(\theta(t))& -\cos(\theta(t))
          \end{pmatrix}\,.
\]
\par
A straightforward calculation shows that $A^TA''$ is symmetric when
\begin{equation}\label{eq-pre-2d-1vf}
    \sinh^2 ( s ) \theta'' -\cosh^2 ( s  )  \mu''  
 +2\sinh ( s  ) \cosh ( s ) s' (
  \theta'  - \mu'   )  =0\,,
\end{equation}
which implies (after integrating)
 \begin{equation}
  \cosh^2 ( s   )  \mu'  - \sinh^2 ( s   ) \theta'  =\mathrm{constant}\,.
\label{2d-1vf}
\end{equation}
\par
Kirchhoff's solution is of this form  with $s$ constant, $\mu(t)=\mu_0t$, where $\mu_0$ is a non-zero real number, and $\theta\equiv 0$.
\par\smallskip
An easy consequence of Lemma~\ref{lem-symmetric} is the following.
\begin{theorem}\label{thm-2d}
Let $\varphi$ have the form (\ref{eq-structure-affine}), where $v$ is a diffeomorphism. Assume that $A(t)$ are parametrized via the function $\psi$ in (\ref{sl2-param}), where  $s, \mu$, and $\theta$ satisfy (\ref{2d-1vf}). Then, the function $\varphi$ provides a solution to the incompressible Euler equations.
\end{theorem}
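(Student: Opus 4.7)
The plan is to apply Lemma~\ref{lem-symmetric} directly. Since $v$ is a diffeomorphism we have $\det(d\varphi^t) = \det(A(t))\det(dv) = \det(dv) \neq 0$, and $A(t)\in\mathbb{SL}(2)$ by construction, so the non-degeneracy hypothesis of Problem~\ref{problem} is met. It therefore suffices to verify that the matrix $A^T A''$ is symmetric; the pressure will then automatically be given by $p = -\tfrac{1}{2}\langle v, A^T A'' v\rangle$. Because $A^T A''$ is $2\times 2$, its symmetry amounts to a single scalar condition, namely the vanishing of its antisymmetric part.

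First I would compute $A'$ and $A''$ from the parametrization~(\ref{sl2-param}), exploiting the identities $R_1^T R_1 = I$ (since $R_1$ is a rotation), $R_2^T R_2 = I$ (since $R_2$ is a reflection), and the observation that $R_1'$ is proportional to $\mu'$ while $R_2'$ is proportional to $\theta'$, with proportionality factors that are themselves $2\times 2$ matrices expressible in closed form via $R_1, R_2$. The second derivative $A''$ then expands as a linear combination of these building blocks with coefficients polynomial in $\cosh(s)$, $\sinh(s)$, $s'$, $s''$, $\mu'$, $\mu''$, $\theta'$, $\theta''$.

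Next I would substitute these expressions into $A^T A''$ and isolate its antisymmetric part. Each product $R_i^T R_j'$, $R_i^T R_j''$, etc.\ is either symmetric or skew in an elementary way, so after collecting terms the antisymmetric contribution collapses to a single scalar equation which, by direct calculation, coincides with~(\ref{eq-pre-2d-1vf}):
\[
\sinh^2(s)\,\theta'' - \cosh^2(s)\,\mu'' + 2\sinh(s)\cosh(s)\, s'(\theta' - \mu') = 0.
\]
I expect this bookkeeping to be the main (though essentially mechanical) obstacle of the proof: careful sign management in the $\sinh/\cosh$ pieces is the only thing that can plausibly go wrong.

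Finally I would observe that the left-hand side above is, up to an overall sign, precisely $\tfrac{d}{dt}\bigl[\cosh^2(s)\mu' - \sinh^2(s)\theta'\bigr]$. Consequently, the hypothesis~(\ref{2d-1vf}), that this latter quantity is constant in $t$, forces~(\ref{eq-pre-2d-1vf}) to hold, which in turn makes $A^T A''$ symmetric. Lemma~\ref{lem-symmetric} then delivers the theorem.
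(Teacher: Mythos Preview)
Your proposal is correct and follows essentially the same route as the paper: the paper also derives \eqref{eq-pre-2d-1vf} as the single scalar symmetry condition on $A^TA''$ via the parametrization~\eqref{sl2-param}, observes that it is (minus) the derivative of the quantity in~\eqref{2d-1vf}, and then invokes Lemma~\ref{lem-symmetric}. The only difference is presentational---the paper performs the computation before stating the theorem and records the result as an immediate consequence, whereas you package the same steps inside the proof itself.
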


We would like to remark that the previous theorem recovers all the solutions in \cite{CM} corresponding to what was classified as solutions of type 1 and solutions of type~2~(i) in that paper. Of course, the harmonicity condition on the coordinates of $v$ is not assumed in our case.
\par\smallskip
There is an interesting connection to the geodesics that we now describe (see also \cite{A} and \cite{AK}). Recall that  the group of orthogonal matrices $\mathbb{O}(2)$ has two disjoint components. Considering the Riemannian metric induced by the embedding $\mathbb{O}(2)\subset\mathbb{R}^4$ one can check that both components are in fact isometric (up to a constant factor) to the unit circle with the standard embedding $S^1\subset\mathbb{R}^2$. 

Now let us look for ``pure'' rotations  among curves  $A=A(t)=\psi(s(t),\mu(t), \theta(t))$ in $\mathbb{SL}(2)$ such that $s$, $\mu$, and $\theta$ satisfy (\ref{2d-1vf}), that is, we assume that the matrices $A\in \mathbb{SO}(2)$. Then, necessarily $s\equiv 0$ and hence $\mu'$ is constant. We can interpret this observation  by saying that the curve $t\to A(t)$ must be a geodesic in this case.

In the same way one could think about geodesics in $\mathbb{SL}(2)$. We consider $\mathbb{SL}(2)$ as a Riemannian manifold where the metric is induced by the embedding $\mathbb{SL}(2)\subset\mathbb{R}^4$ and  $A$ is a curve in $\mathbb{SL}(2)$.  We then obtain the last result in this section.
\begin{theorem}\label{thm-2d-2} 
If $A=A(t)$ is a geodesic in $\mathbb{SL}(2)$, then the function $\varphi(t, \alpha)=A(t)v(\alpha)$, where $v$ is a diffeomorphism, provides a solution to the incompressible Euler equations.
\end{theorem}
\begin{proof} Let us use (\ref{eq-Christoffel}) to get the geodesic equations in terms of  the parametrization in (\ref{sl2-param}), which  are
\begin{equation}\label{eq-geodesics}
\begin{cases}
2\cosh(2s) s'' +\sinh(2s)\big(  2 (s')^2-(\theta')^2-(\mu')^2\big)=0\,,\\
    \cosh(s)\mu''+2\sinh(s)s'\mu' = 0\,,\\
    \sinh(s)\theta''+2\cosh(s)s'\theta' = 0\,.
   \end{cases}
\end{equation}
Therefore, (\ref{eq-pre-2d-1vf})  and hence (\ref{2d-1vf}) hold. A direct application of Theorem~\ref{thm-2d} ends the proof.
\end{proof}
Note that, however, not all matrices $A=A(t)$ for which (\ref{2d-1vf}) holds satisfy (\ref{eq-geodesics}), so that they are not necessarily geodesics.

\section{New solutions, II}\label{sec-sol2}
In this section we consider  those solutions to the $2$-dimensional incompressible Euler equations for which the labelling map $\varphi$ is of the form (\ref{eq-structure-affine-combination}), again without any assumption on harmonicity of the vector fields involved in this description. That is, we assume that 
\begin{equation}\label{yrite2d}
\varphi(t, \alpha)=M_1(t)v(\alpha)+M_2(t)w(\alpha)\,,
\end{equation}
where
\[
 M_1(t)=\begin{pmatrix}
        \cos(\mu t)&-\sin(\mu t)\\
        \sin(\mu t)&  \cos(\mu t)
        \end{pmatrix}\quad{\rm and }\quad
         M_2(t)=\begin{pmatrix}
        \cos(\theta t)&-\sin(\theta t)\\
        \sin(\theta t)&  \cos(\theta t)
        \end{pmatrix}
\]
for certain real constants $\mu$ and $\theta$ (with $\mu\neq \theta$) and  sufficiently smooth vector fields $v$ and $w$.
\par
\begin{rk}
Notice that the matrices $M_1$ and $M_2$ are not just any curves of $\mathbb{O}(2)$ but actually geodesics in the sense explained above.
\end{rk}
 \par
The following important example as well as the solutions obtained in \cite{AY} and those classified as solutions of type 2 (ii) in \cite{CM} are of this form.
\par\smallskip
\begin{ex}\label{ex-G}
Gerstner's flow \cite{G} is of the form (\ref{yrite2d}) with $M_1=I$, $v(\alpha)=\alpha$,
\[
    M_2=\begin{pmatrix}
        \cos(k t)& -\sin(kt)\\
          \sin(kt)& \cos(kt)
          \end{pmatrix}\,,\quad {\rm and}\quad
          w(\alpha)=
           \frac{e^{k\alpha_2}}{k}
          \begin{pmatrix}
        \sin(k\alpha_1)\\
          -\cos(k\alpha_1)
          \end{pmatrix}\,,
\]
where $k$ is a non-zero real number.
\end{ex}
\par
We should mention that, in contrary to those cases considered in the previous section, where the (global) injectivity of the vector field $v$ implies the global injectivity of the functions $\varphi$, now some extra conditions must be satisfied in order to guarantee the injectivity of the functions in (\ref{yrite2d}) in a similar way as what occur in the known solutions of this type  (see, for instance, \cite[Thm. 4]{CM}). With the hope to make our exposition more clear, we have decided to make a local analysis in the proofs of our main results in this section and do not include the details on the conditions for the global univalence of the mappings in (\ref{yrite2d}).
\par\smallskip
Notice that the vector fields $v$ and $w$ in the previous example satisfy that their coordinates are harmonic, though there are other solutions with the same structure as in (\ref{yrite2d}) which are not harmonic, as our next theorem shows. 
\begin{theorem} \label{thm-pde}
Let the map $\varphi$ have the form (\ref{yrite2d}). Then $\varphi$ provides a solution to the $2$-dimensional Euler equations if $v$ and $w$ satisfy the system of PDEs
\begin{equation}\label{eq-system-pde}
\begin{cases}
  w^1_{10}v^2_{01}-w^1_{01}v^2_{10}-w^2_{10}v^1_{01}+w^2_{01}v^1_{10}=0\,,\\
w^2_{10}v^2_{01}-w^2_{01}v^2_{10}+w^1_{10}v^1_{10}-w^1_{01}v^1_{10}=0\,,
\end{cases}
\end{equation}
and $\det(dv)+\det(dw)\ne0$. 
\label{2d-2vk-rot}
\end{theorem}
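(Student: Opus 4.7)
The plan is to verify the two requirements of Problem~\ref{problem} for the ansatz (\ref{yrite2d}): that $(d\varphi)^T\varphi'' + \nabla p = 0$ for some pressure $p$, and that $\det(d\varphi^t)$ is constant in $t$ and nonzero.

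First I would compute the relevant derivatives. Since $M_1$ and $M_2$ are pure rotations by linear functions of $t$, direct differentiation gives $M_1'' = -\mu^2 M_1$ and $M_2'' = -\theta^2 M_2$, so $\varphi'' = -\mu^2 M_1 v - \theta^2 M_2 w$. Because $v$ and $w$ depend only on $\alpha$, one has $d\varphi = M_1\,dv + M_2\,dw$, and substituting, using $M_i^T M_i = I$, yields
\[
(d\varphi)^T\varphi'' = -\mu^2(dv)^T v - \theta^2(dw)^T w - \theta^2(dv)^T N w - \mu^2(dw)^T N^T v,
\]
where $N(t) := M_1(t)^T M_2(t) = R\bigl((\theta-\mu)t\bigr)$ is again a rotation. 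The first two summands are manifest gradients in $\alpha$, namely $-\tfrac12\nabla(\mu^2|v|^2 + \theta^2|w|^2)$. For the cross terms I would apply the identity $(dA)^T B + (dB)^T A = \nabla(A\cdot B)$ with $A = v$, $B = Nw$; because $N$ is constant in $\alpha$ at fixed $t$, one has $d(Nw) = N\,dw$, and the identity becomes $(dv)^T Nw + (dw)^T N^T v = \nabla(v\cdot Nw)$. Consequently the sum of the cross terms equals $-\theta^2\nabla(v\cdot Nw) - (\mu^2-\theta^2)(dw)^T N^T v$, so a pressure exists locally if and only if the vector field $(dw)^T N^T v$ is curl-free in $\alpha$.

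Next I would expand this curl. Writing $c := \cos(\rho t)$, $s := \sin(\rho t)$ with $\rho := \theta-\mu \neq 0$, a direct computation shows that the curl takes the form $c\,P_1(\alpha) + s\,P_2(\alpha)$, where $P_1, P_2$ are polynomial expressions in the first partial derivatives of $v$ and $w$. Since $\cos(\rho t)$ and $\sin(\rho t)$ are linearly independent functions of $t$, the curl vanishes identically in $t$ if and only if $P_1 \equiv P_2 \equiv 0$, and the algebraic bookkeeping identifies these two equations with the system (\ref{eq-system-pde}).

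In parallel I would treat the Jacobian. Since $\det M_i = 1$, one has $\det(d\varphi^t) = \det(dv + N\,dw)$, and the bilinearity of the $2\times 2$ determinant on rows yields an expansion $\det(dv) + \det(dw) + c\,Q_1(\alpha) + s\,Q_2(\alpha)$. A short calculation identifies $Q_1, Q_2$ with $P_1, P_2$ up to signs, so the system (\ref{eq-system-pde}) forces these cross contributions to vanish for every $t$, giving the constant value $\det(dv) + \det(dw)$, which is nonzero by hypothesis. Thus both conditions of Problem~\ref{problem} hold, with explicit pressure $p = -\tfrac12(\mu^2|v|^2 + \theta^2|w|^2) - \theta^2\,v\cdot Nw - (\mu^2-\theta^2)\Pi$, where $\Pi$ is a primitive of the now curl-free field $(dw)^T N^T v$. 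The main obstacle is purely combinatorial: recognizing that the curl computation and the determinant expansion produce, up to signs, the same pair of polynomial identities, and matching them to (\ref{eq-system-pde}). Once this is established, the result follows from routine vector-calculus identities together with the linear independence of $\cos(\rho t)$ and $\sin(\rho t)$.
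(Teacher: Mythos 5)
Your proposal is correct, and it reaches the same pair of structural conditions as the paper, but by a genuinely more elementary route. The paper works with the block form $\varphi=A(t)u(\alpha)$, $A=(M_1\,|\,M_2)$, applies Cauchy--Binet to $\det(d\varphi)$ and reduces the coefficients $p_i$ and $f_i$ to normal form modulo the ideal $\langle c_1^2+s_1^2-1,\,c_2^2+s_2^2-1\rangle$ in a computer algebra system, obtaining $\det(d\varphi)=\det(dv)+\det(dw)+\cos((\mu-\theta)t)\,q_1+\sin((\mu-\theta)t)\,q_2$ and $N_{12}=(\mu^2-\theta^2)\bigl(-\sin((\mu-\theta)t)\,q_1+\cos((\mu-\theta)t)\,q_2\bigr)$, so that $q_1=q_2=0$ kills both obstructions at once. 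You instead exploit the rotation structure directly ($M_i''=-\omega^2M_i$, $N=M_1^TM_2=R((\theta-\mu)t)$) and the gradient identities $(dv)^Tv=\tfrac12\nabla|v|^2$ and $(dv)^TNw+(dw)^TN^Tv=\nabla\langle v,Nw\rangle$, which isolates the single non-gradient term $(\mu^2-\theta^2)(dw)^TN^Tv$; its curl and the determinant cross terms are both linear combinations of $\cos((\theta-\mu)t)$ and $\sin((\theta-\mu)t)$ whose coefficients span the same two polynomials $q_1,q_2$ (in your notation $P_1=-q_2$, $P_2=-q_1$, $Q_1=-P_2$, $Q_2=P_1$ --- a swap as well as signs, which does not affect the conclusion). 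What your route buys is a hand-checkable proof with an explicit pressure $p$; what the paper's route buys is a template that generalizes to arbitrary $2\times4$ matrices $A$, which is exactly how Theorem~\ref{thm-pde2} is proved. Two small points: your ``if and only if'' for the existence of the pressure fails in the degenerate case $\mu=-\theta$ (then $\mu^2-\theta^2=0$ and the obstruction vanishes for free), though only sufficiency is needed; and your computation in fact exposes a typo in the printed system (\ref{eq-system-pde}) --- the second equation should read $w^2_{10}v^2_{01}-w^2_{01}v^2_{10}+w^1_{10}v^1_{01}-w^1_{01}v^1_{10}=0$, matching the polynomial $q_2$ of the paper's proof and your $-P_1$.
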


Before stating the proof of Theorem~\ref{thm-pde},  let us see how this result relates to the previously known solutions and make some comments. 
\par
Perhaps one can say that Theorem \ref{2d-2vk-rot} explains why harmonic maps have played a prominent role in the analysis of Euler equations. If we choose $v$ as a CR map (so that, in particular, the coordinates of $v$ are harmonic functions), then the system in Theorem \ref{2d-2vk-rot} becomes
\[
     \begin{cases}
v^1_{10}\big(  w^1_{10}-w^2_{01}\big)+v^1_{01}\big(w^1_{01}+w^2_{10}\big)=0\,,\\
v^1_{10}\big(w^1_{01}+w^2_{10}\big)+v^1_{01}\big(w^1_{10}-w^2_{01}\big)=0\,.
\end{cases}
\]
Hence  $w$ must be a CR map with the sign convention of complex analysis. These are precisely the solutions found in \cite{AY}, and this choice of $v$ and $w$ is called solutions of  type 2 (ii) in \cite{CM}.   

However, there are plenty of solutions of (\ref{eq-system-pde}) which are not harmonic: we have 4 unknowns and only 2 equations so that  we can, for example, choose $w$ arbitrarily and then solve the corresponding system, which can be written in the form $Lv=0$, for $v$. In fact the two linear  equations obtained in this way are rather special, as the following result shows. 
\begin{lemma} If $\det(dw)\ne0$, then $L$ is elliptic.
\end{lemma}
\begin{proof} To show that $L$ is elliptic, we need to check that its principal symbol $\sigma L$ defined by (\ref{eq-elliptic}) is injective for all $\xi\in\mathbb{R}^2$, $\xi\neq 0$.
It suffices, then, to check
\[
    \det(\sigma L)=\big(w^1_{01}\xi_1-w^1_{10}\xi_2)^2+
    \big(w^2_{01}\xi_1-w^2_{10}\xi_2)^2\ne0
\]
for all real non-zero vectors $\xi$, which holds if $\det(dw)\ne0$.
\end{proof}
When considering elliptic boundary value problems, one needs to impose appropriate boundary conditions. Since $\det(\sigma L)$ is a second order polynomial, we need to impose one boundary condition. The relevant criterion is known as \emph{Shapiro-Lopatinskij condition} \cite{agranovich}, which is easy to check in our case since rather amazingly we have the following explicit factorization
\begin{eqnarray*}
    \det(\sigma L)&=&\frac{1}{|w_{10}|^2}\,\big( |w_{10}|^2\xi_2-(\langle w_{10}, w_{01}\rangle+i\,\det(dw))\xi_1\big)\\
    &\times& \big( |w_{10}|^2\xi_2-(\langle w_{10}, w_{01}\rangle-i\,\det(dw))\xi_1\big)\,.
\end{eqnarray*}
\par\medskip
\begin{rk}
In (\ref{yrite2d}) we have assumed that  both matrices $M_1$ and $M_2$ are rotations. We could as well choose one or both of the matrices to be reflections. The computations would be precisely the same as above, except a few changes in the signs. The overall conclusion is the same: given one vector field one obtains an elliptic system for the other.
\end{rk}
\par\smallskip
Let us then turn to the proof of the theorem. To this end,  notice that the statement of Theorem~\ref{thm-pde} as well as its conclusion already make it clear that it is not essential to consider  $v$ and $w$ as distinct objects, it is more natural to consider a map with four components. Hence let us introduce a more convenient way to analyze the problem. We will look for solutions in the following form:
\begin{equation}
\varphi(t, \alpha)=A(t)u(\alpha)\,,
\label{yrite2da}
\end{equation} 
where $A=(M_1\, |\, M_2)\,:\,\mathbb{R}\to\mathbb{R}^{2\times 4}$ is a $1\times 2$ block matrix with first entry $M_1$ and second entry $M_2$ and $u\,:\,\mathbb{R}^2\to\mathbb{R}^{4}$ is the vector $u=(v\, |\, w)=(v^1, v^2, w^1, w^2)$.
      
\par
By the Cauchy-Binet formula we can write
\begin{equation}
  \det(d\varphi)=\sum_{i=1}^6 p_ig_i\,,
\label{det-CB}
\end{equation}
where $p_i$ are the $2\times 2$ minors of $A$ and $g_i$ are the $2\times 2$ minors of $du$. 
\par
Let now $B=A^TA''$ and let $y=d\varphi^T\varphi$. If the map $\varphi$ given by (\ref{yrite2da})  solves Problem~\ref{problem}, then we must have
\[
   N=dy-dy^T=0\,.
\]
Note that $N$ is skew symmetric so we have only one condition: $N_{12}=-N_{21}=0$. Let us analyze further this condition. 

First, let us  write $N$ with index notation of Riemannian geometry, where it matters if indices are up or down (though in our context we can simply arrange the expressions so that the summation convention works). Let us use the notation $y_i=V^\ell_{;i}B_{\ell k}V^k$ to get
\begin{align*}
       N_{ij}=&y_{i;j}-y_{j;i}=u^\ell_{;i}B_{\ell k}u^k_{;j}-u^\ell_{;j}B_{\ell k}u^k_{;i}\\
         =&B_{\ell k}\big(u^\ell_{;i}u^k_{;j}-u^\ell_{;j}u^k_{;i}\big)=
            u^\ell_{;i}\big(B_{\ell k}-B_{k\ell }\big) u^k_{;j}\,.
\end{align*}
A straightforward calculation shows that since the parameters $\mu$ and $\theta$ that determine the matrix $A$ are different, then the matrix $B$ is not symmetric (which would imply $N=0$). We can then write $N_{12}$ as
\begin{equation}
     N_{12}=\sum_{i=1}^6 f_i g_i=B_{\ell k}\big(u^\ell_{;1}u^k_{;2}-u^\ell_{;2}u^k_{;1}\big)\,,
\label{diff-yht-N}
\end{equation}
where $f_i$ depend only on $t$ and $g_i$ are, \emph{again}, the $2\times 2$ minors of $du$. 
\par\smallskip
The strategy of the proof of Theorem~\ref{thm-pde} is now clear: we compute  the formulas (\ref{det-CB}) and (\ref{diff-yht-N}) -the use of symbolic computation programs  is fundamental to this end- and then try to choose $A$ and $u$ such that $ \det(d\varphi)$ does not depend on time and $N_{12}$ is identically zero. Note that a priori we have $12$ conditions and $12$ independent functions. However, as we will see below, there are plenty of possibilities to choose the appropriate functions.

\begin{proof} (Theorem \ref{2d-2vk-rot})
We now choose $A=(M_1\, |\, M_2)$ in (\ref{yrite2da}),  where $M_1$ and $M_2$ are two given matrices as in (\ref{yrite2d}). Let us introduce the variables
$ c_1=\cos(\mu t)$, $s_1=\sin(\mu t)$, $c_2=\cos(\theta t)$, and $s_2=\sin(\theta t)$; and the ideal
\[
      I=\langle c_1^2+s_1^2-1,c_2^2+s_2^2-1\rangle\subset\mathbb{A}=\mathbb{Q}(\mu,\theta)[c_1,s_1,c_2,s_2]\,,
\]
where $\mathbb{Q}(\mu,\theta)$ is the field of rational functions of $\mu$ and $\theta$. 
\par
Let us first use the division algorithm to compute
\[
         \det(d\varphi)=\sum_{i=1}^6 p_ig_i\,,
\]
where $p_i\in\mathbb{A}$ and $g_i$ are the $2\times 2$ minors of $du$. Then, we compute the normal forms of $p_i$ with respect to $I$, which gives
\[
\det(d\varphi)=u^1_{10}u^2_{01}-u^1_{01}u^2_{10}+u^3_{10}u^4_{01}-u^3_{01}u^4_{10}+
(c_1c_2+s_1s_2)q_1+(s_1c_2-s_2c_1)q_2\,,
\]
where
\[
q_1=u^3_{10}u^2_{01}-u^3_{01}u^2_{10}
-u^4_{10}u^1_{01}+u^4_{01}u^1_{10}\quad{\rm and}\quad
q_2=u^4_{10}u^2_{01}-u^4_{01}u^2_{10}
+u^3_{10}u^1_{01}-u^3_{01}u^1_{10}\,.
\]
Therefore, we see that $\det(d\varphi)$ is independent of $t$ if  $q_1=q_2=0$. 
\par
Moreover, by  dividing the polynomial $N_{12}$ in (\ref{diff-yht-N}) with respect to $\langle q_1,q_2\rangle$ gives
\[
   N_{12}=\big(\mu^2-\theta^2\big)\big((s_2c_1-c_2s_1)q_1+(c_1c_2+s_1s_2)q_2\big)=0
\]
if $q_1=q_2=0$. The condition that  $q_1=q_2=0$ is precisely (\ref{eq-system-pde}) since $u=(v | w)$. This ends the proof.
\end{proof}
\par\medskip
Instead of considering matrices $A=(M_1\, |\, M_2)$, where $M_1$ and $M_2$ are as in (\ref{yrite2d}),  we could consider more general $2\times 4$ matrices $A=(a_{ij})$ in (\ref{yrite2da}). The systematic analysis of all cases would take too much space so we will simply give a particular construction which indicates the general idea and produces a result which we hope can be of some interest to the reader.
\par
We just impose to the matrix  $A$ in (\ref{yrite2da})  the following constraints:
\begin{equation}\label{eq-constraintA}
  \begin{cases}
     a_{11}a_{22}-a_{12}a_{21}-1=0\,,\\
     a_{13}a_{24}-a_{14}a_{23}-1=0\,,\\
     a_{14}-a_{12}+a_{11}=0\,,\\
     a_{24}-a_{22}+a_{21}=0\,.
     \end{cases}
\end{equation}
The first two conditions in (\ref{eq-constraintA}) are rather natural since one suspects that $\mathbb{SL}(2)$ will anyway be essential in the analysis and they are obviously satisfied by any matrix $A=(M_1\, |\, M_2)$ as above. The other two conditions  in (\ref{eq-constraintA}) are not a priori clear, though the 4 conditions together lead to the following family of solutions to the $2$-dimensional Euler equations. 

\begin{theorem} \label{thm-pde2}
Assume that the $2\times 4$ matrix $A=(a_{ij})$ solves the system
\begin{equation}
\begin{cases}
     a_{11}a_{22}-a_{12}a_{21}-1=0\,,\\
     a_{13}a_{24}-a_{14}a_{23}-1=0\,,\\
     a_{14}-a_{12}+a_{11}=0\,,\\
     a_{24}-a_{22}+a_{21}=0\,,\\
    a_{21}''a_{22}-a_{22}''a_{21}+a_{11}''a_{12}-a_{12}''a_{11}=0\,,
\end{cases}
\label{aikasys}
\end{equation}
and suppose that the vector $u:\mathbb{R}^2\to \mathbb{R}^4$ is a solution of
\begin{equation}
\begin{cases}
 -u^3_{10}u^2_{01}+u^3_{01}u^2_{10}-u^3_{10}u^1_{01}+u^3_{01}u^1_{10}=0\,,\\
    -u^4_{10}u^3_{01}+u^4_{01}u^3_{10}-u^3_{10}u^1_{01}+u^3_{01}u^1_{10} =0\,.
\end{cases}
\label{transport}
\end{equation}
Then (\ref{yrite2da}) produces a solution to the $2$-dimensional incompressible Euler equations.
\end{theorem}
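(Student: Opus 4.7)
\medskip
\noindent\textbf{Proof plan for Theorem \ref{thm-pde2}.}
I would mimic closely the strategy of Theorem~\ref{thm-pde}: write $\varphi=A(t)u(\alpha)$ with $A$ now a general $2\times 4$ matrix subject to (\ref{aikasys}), expand $\det(d\varphi)$ and the off-diagonal entry $N_{12}$ of $d((d\varphi)^T\varphi'')-d((d\varphi)^T\varphi'')^T$ via the Cauchy--Binet-type identity (\ref{det-CB}) and (\ref{diff-yht-N}), and then eliminate the ``bad'' terms using (\ref{transport}) and the algebraic constraints on $A$. All quantities will be expressed in the six minors $g_{ij}=u^i_{10}u^j_{01}-u^i_{01}u^j_{10}$, $1\le i<j\le 4$, and in the six minors $p_{ij}=a_{1i}a_{2j}-a_{1j}a_{2i}$ of $A$.

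The first step is to exploit the four algebraic constraints in (\ref{aikasys}). A direct substitution of $a_{14}=a_{12}-a_{11}$, $a_{24}=a_{22}-a_{21}$ into $p_{14}$ and $p_{24}$, combined with $p_{12}=1$, gives $p_{14}=p_{24}=1$; together with $p_{34}=1$, this leaves only $p_{13}$ and $p_{23}$ as possibly time-dependent minors, and the same substitution in $p_{23}-p_{13}$ yields $p_{23}-p_{13}=-(a_{13}a_{24}-a_{14}a_{23})=-1$. The second step is to rewrite the two transport equations (\ref{transport}) as $g_{13}+g_{23}=0$ and $g_{13}+g_{34}=0$, so that $g_{13}=-g_{23}$ and $g_{34}=g_{23}$. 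Plugging everything into $\det(d\varphi)=\sum p_{ij}g_{ij}$, the coefficient of $g_{23}$ becomes $1-p_{13}+p_{23}=0$, and we obtain
\[
  \det(d\varphi)=g_{12}+g_{14}+g_{24},
\]
which is manifestly independent of $t$, so the incompressibility requirement $\det(d\varphi^t)=\det(d\varphi^0)$ holds.

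For the remaining Euler condition we write, as in the proof of Theorem~\ref{thm-pde}, $N_{12}=\sum_{i<j}d_{ij}g_{ij}$ with $d_{ij}=B_{ij}-B_{ji}$ and $B=A^TA''$; explicitly, $d_{ij}=a_{1i}a_{1j}''-a_{1j}a_{1i}''+a_{2i}a_{2j}''-a_{2j}a_{2i}''$. The fifth equation in (\ref{aikasys}) is precisely $d_{12}=0$. Differentiating $a_{14}=a_{12}-a_{11}$ and $a_{24}=a_{22}-a_{21}$ twice and substituting into $d_{14}$ and $d_{24}$, the $a_{11}''$- and $a_{12}''$-terms (resp.\ the $a_{21}''$- and $a_{22}''$-terms) collapse so that $d_{14}=d_{24}=d_{12}=0$. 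Finally, the combination $-d_{13}+d_{23}+d_{34}$, after substituting $a_{14}''=a_{12}''-a_{11}''$ and $a_{24}''=a_{22}''-a_{21}''$, is a purely algebraic identity: the coefficients of $a_{13}''$, $a_{11}''$, $a_{12}''$ (and symmetrically for the second row) cancel individually, so $-d_{13}+d_{23}+d_{34}\equiv 0$. Using $g_{13}=-g_{23}$ and $g_{34}=g_{23}$ once more,
\[
 N_{12}=d_{12}g_{12}+d_{14}g_{14}+d_{24}g_{24}+(-d_{13}+d_{23}+d_{34})g_{23}=0.
\]
Hence $(d\varphi)^T\varphi''$ is a closed one-form on $D$, so locally $(d\varphi)^T\varphi''=-\nabla p$ for a suitable pressure, completing the verification of (\ref{yht}).

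The main obstacle is essentially bookkeeping: several of the required cancellations (most notably $-d_{13}+d_{23}+d_{34}=0$ and $p_{23}-p_{13}=-1$) are not transparent from (\ref{aikasys}) alone and only emerge after eliminating the fourth column of $A$ through the two linear constraints. In practice I would carry these manipulations out either by hand or, more safely, by reducing the relevant polynomials modulo the ideal generated by (\ref{aikasys}) in \textsc{Singular}, exactly as was done in the proof of Theorem~\ref{thm-pde}. Once those identities are established, the proof reduces to the chain of substitutions sketched above.
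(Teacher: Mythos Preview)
Your proof is correct and follows the same overall strategy as the paper: expand $\det(d\varphi)$ and $N_{12}$ in the six $2\times 2$ minors via Cauchy--Binet, then use the constraints (\ref{aikasys}) on the minors $p_{ij}$ and the constraints (\ref{transport}) on the minors $g_{ij}$ to kill the time-dependent terms. The paper packages these reductions as normal-form computations in \textsc{Singular} with respect to successively larger ideals $I\subset I_1\subset I_2$, arriving at $\det(d\varphi)=g_1+g_2+g_3-g_4+p_5(g_4+g_5)+g_6$ and $N_{12}=\hat f_1(g_1-g_4)$; you instead write out the needed identities explicitly ($p_{14}=p_{24}=1$, $p_{23}-p_{13}=-1$, $d_{14}=d_{24}=d_{12}$, and $-d_{13}+d_{23}+d_{34}\equiv 0$) and substitute by hand. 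The two arguments are equivalent up to the labeling of the minors; your version is more elementary and self-contained, while the paper's ideal-theoretic phrasing is more systematic and would generalize more easily to larger matrices. One small remark: for the determinant you invoke both transport equations, but in fact only $g_{13}+g_{23}=0$ is needed there (as in the paper), since $\det(d\varphi)=g_{12}+g_{14}+g_{24}+g_{34}+(p_{23}-p_{13})g_{23}=g_{12}+g_{14}+g_{24}+g_{34}-g_{23}$ is already time-independent; the second equation is only required for $N_{12}=0$.
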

\begin{proof}
Let us introduce the ideal
\begin{align*}
    I=&\langle  a_{11}a_{22}-a_{12}a_{21}-1,     a_{13}a_{24}-a_{14}a_{23}-1,
     a_{14}-a_{12}+a_{11},     a_{24}-a_{22}+a_{21}\rangle\\
     \subset&\ \mathbb{A}=
     \mathbb{Q}[a_{11},a_{12},a_{13},a_{14},a_{21},a_{22},a_{23},a_{24}]\,.
\end{align*}
Using again the division algorithm we have
\[
         \det(d\varphi)=\sum_{i=1}^6 p_ig_i\,,
\]
where $p_i\in\mathbb{A}$ and $g_i$ are, again, the $2\times 2$ minors of $du$. The computation of the normal forms of $p_i$ with respect to $I$ gives
\[
       \det(d\varphi)=g_1+g_2+g_3-g_4+p_5(g_4+g_5)+g_6\,.
\]
Hence, $ \det(d\varphi)$ is independent of $t$ if we require $g_4+g_5=0$, which is precisely the first equation in (\ref{transport}) since
\[
g_4+g_5=-u^3_{10}u^2_{01}+u^3_{01}u^2_{10}-u^3_{10}u^1_{01}+u^3_{01}u^1_{10}\,.
\]
\par
Let us now identify the matrix $A$ (\emph{resp.} $A''$) with the vector $a$ (\emph{resp.} $a''$) which contains all the elements of $A$ and consider the ideal 
\[
      I_1=I+\langle  a_{14}''-a_{12}''+a_{11}'',     a_{24}''-a_{22}''+a_{21}''\rangle\\
     \subset\ \mathbb{A}_1=\mathbb{Q}[a,a'']\,.
\]
Next, we compute the formula (\ref{diff-yht-N}) which yields
\[
       N_{12}=  \sum_{i=1}^6 f_i g_i\,,
\]
where $f_i\in\mathbb{A}_1$. Computing the normal forms we find
\[
        \mathsf{NF}(f_2,I_1)= \mathsf{NF}(f_3,I_1)= \mathsf{NF}(f_6,I_1)=
          a_{21}''a_{22}-a_{22}''a_{21}+a_{11}''a_{12}-a_{12}''a_{11}\,.
\]
Then we define a bigger ideal
\[
    I_2=I_1+\langle   a_{21}''a_{22}-a_{22}''a_{21}+a_{11}''a_{12}-a_{12}''a_{11}\rangle
\]
and reduce $N_{12}$ with respect to $I_2$, which yields
\[
  N_{12}=\hat f_1g_1+\hat f_4 g_4+\hat f_5 g_5\,,
\]
where  $\hat f_1+\hat f_4-\hat f_5=0$ . Now using the relation  $g_4+g_5=0$, we get $N_{12}=\hat f_1(g_1-g_4)$. Hence, the assumption that $g_1-g_4=0$ (which is equivalent to the second PDE for $u$ in (\ref{transport})) gives $f=0$.
\end{proof}

Notice that in the system (\ref{transport}) there are two equations for four unknowns. We can then, for instance, consider two given arbitrary functions $u^1$ and $u^3$ to obtain a decoupled system for $u^2$ and $u^4$. This resulting system for $u^2$ and $u^4$ is a kind of transport equation which can be solved with the method of characteristics in the usual way. (In particular, again the harmonicity or lack of harmonicity of $u$ plays no role here.)   
\par
The system (\ref{aikasys}) is also underdetermined so that there are also plenty of possibilities for choosing the time dependence of the system. For instance, we can choose the submatrix 
\[
        \hat A=\hat A(t) =\begin{pmatrix}
        a_{11}&a_{12}\\
        a_{21}& a_{22}
        \end{pmatrix}\in \mathbb{SL}(2)
\]
such that $\hat A$ is a geodesic. Then the first and last equations in  (\ref{aikasys}) are automatically satisfied.  Then, we get $(a_{14},a_{24})$ from the third and fourth equations given in (\ref{aikasys}) and, finally, we just need to consider the linear relation $ a_{13}a_{24}-a_{14}a_{23}-1=0$ to obtain $(a_{13},a_{23})$ and produce a $2\times 4$ matrix $A$ that solves the system  (\ref{aikasys}).

\end{document}